\newlength{\defbaselineskip}
\newcommand{\setlinespacing}[1]%
           {\setlength{\baselineskip}{#1 \defbaselineskip}}
\numberwithin{equation}{section}
\newtheorem{thm}{Theorem}[section]
\newtheorem{cor}[thm]{Corollary}
\newtheorem{lem}[thm]{Lemma}
\newtheorem{prop}[thm]{Proposition}
\theoremstyle{definition}
\theoremstyle{remark}
\newtheorem{rem}[thm]{Remark}
\numberwithin{equation}{section}
\begin{document}

\title[Strichartz estimates in Wiener amalgam spaces]
{Strichartz estimates for the Schr\"odinger propagator in Wiener amalgam spaces}

\author{Seongyeon Kim, Youngwoo Koh and Ihyeok Seo}

\thanks{Y. Koh was supported by NRF Grant 2016R1D1A1B03932049 (Republic of Korea).
I. Seo was supported by the NRF grant funded by the Korea government(MSIP) (No. 2017R1C1B5017496).}

\subjclass[2010]{Primary: 35B45, 35Q40; Secondary: 42B35 }
\keywords{Strichartz estimates, Schr\"odinger propagator, Wiener amalgam spaces}

\address{Department of Mathematics, Sungkyunkwan University, Suwon 16419, Republic of Korea}
\email{synkim@skku.edu}

\address{Department of Mathematics Education, Kongju National University, Kongju 32588,
Republic of Korea}
\email{ywkoh@kongju.ac.kr}

\address{Department of Mathematics, Sungkyunkwan University, Suwon 16419, Republic of Korea}
\email{ihseo@skku.edu}

\begin{abstract}
In this paper we study the Strichartz estimates for the Schr\"odinger propagator
in the context of Wiener amalgam spaces
which, unlike the Lebesgue spaces, control the local regularity of a function and its decay at infinity separately.
This separability makes it possible to perform a finer analysis of the local and global behavior of the propagator.
Our results improve some of the classical ones in the case of large time.
\end{abstract}

\maketitle

\section{Introduction}

Consider the following Cauchy problem for the Schr\"odinger equation
\begin{equation} \label{eq}
\begin{cases}
i\partial_t u + \Delta u = 0,\\
u(x,0) = f(x),
\end{cases}
\end{equation}
with $(x,t)\in\mathbb{R}^n\times\mathbb{R}$, $n \ge 1$.
Applying the Fourier transform to \eqref{eq}, the solution $u(x,t)$ is given by
\begin{equation}\label{sol}
e^{it\Delta}f(x)= \frac{1}{(2\pi)^n} \int_{\mathbb{R}^n} e^{i (x\cdot \xi - t|\xi|^2)} \hat f (\xi)d\xi.
\end{equation}
Here the Fourier multiplier $e^{it\Delta}$ is called the Schr\"odinger propagator.

The following space-time integrability of \eqref{sol} in $L^p$ spaces has been intensively studied in the last forty years:
\begin{equation} \label{classi}
\|e^{it\Delta} f\|_{L_t^q L_x^r} \lesssim \|f\|_{L^2}
\end{equation}
for $(q,r)$ Schr\"odinger admissible, i.e., for
\begin{equation}\label{admiss}
q,r\geq 2,\quad \frac{2}{q} + \frac{n}{r} = \frac{n}{2},\quad (q,r,n)\neq(2,\infty,2).
\end{equation}
See \cite{Str,GV,M-S,KT} and references therein.

In this paper we consider these space-time estimates, known as \textit{Strichartz estimates},
in Wiener amalgam spaces which, unlike the $L^p$ spaces, control the local regularity of a function and its decay at infinity separately.
This separability makes it possible to perform a finer analysis of the local and global behavior of the solution.
These aspects were originally pointed out in the papers \cite{CN,CN2,CN3}. 
These spaces were first introduced by Feichtinger \cite{F}
and have already appeared as a technical tool in the study of partial differential equations (\cite{T}).

To begin with, let us recall the definition of Wiener amalgam spaces.
Let $\varphi \in C_0^{\infty}$ be a test function satisfying $\|\varphi\|_{L^2} =1$.
Let $1\le p,q \le \infty$. 
Then the Wiener amalgam space $W(L^p, L^q)$ 
is defined as the space of functions $f \in L_{\textrm{loc}}^p$ 
equipped with the norm
$$\|f\|_{W(L^p, L^q)}= \big\|\, \|f \tau_x \varphi\|_{L^p} \big\|_{L_x^q},$$
where $\tau_x \varphi(\cdot) = \varphi(\cdot-x)$.
Here different choices of $\varphi$ generate the same space and yield equivalent norms.
The Wiener amalgam space can be also seen as a natural extension of $L^p$ space in view of $W(L^p, L^p)=L^p$.
More generally, the Wiener amalgam space $W(A,B)$ for Banach spaces $A$ and $B$ is defined in the same way.

In \cite{CN3}, Cordero and Nicola established the following estimates for Schr\"odinger admissible $(q, r)$:
\begin{equation} \label{C-N}
\|e^{it\Delta}f\|_{W(L^{\infty}, L^{q})_t W(L^{2}, L^{r})_x} \lesssim \|f\|_{L^2}.
\end{equation}
By complex interpolation (see \eqref{inter}) between \eqref{C-N} and \eqref{classi}, they obtained further estimates
\begin{equation} \label{C-N2}
\|e^{it\Delta}f\|_{W(L^{\widetilde{q}}, L^{q})_t W(L^{\widetilde{r}}, L^{r})_x} \lesssim \|f\|_{L^2}
\end{equation}
for $(\tilde q, \tilde r)$ and $(q,r)$ satisfying
$1\leq \tilde q, \tilde r \leq \infty$, $2\leq q, r \leq \infty$,
$$\tilde r \leq r,\quad
\frac{2}{q}+\frac{n}{r}\leq\frac{n}{2}\leq\frac{2}{\tilde q} + \frac{n}{\tilde r},$$
$\widetilde{r},r<\infty$ if $n=2$, and if $n\geq3$, $\widetilde{r}\leq2n/(n-2)$.
As mentioned in \cite{CN3}, these estimates say that the analysis of the local regularity of the Schr\"odinger propagator
is quite independent of its decay at infinity since there are no relations between the pairs $(\tilde q, \tilde r)$ and $(q,r)$
other than $\tilde r \leq r$.
(See also \cite{CN,S} for related results.)

Our goal in this paper is to provide a picture of the Strichartz estimates in Wiener amalgam spaces for the Schr\"odinger
propagator on initial data with regularity.
We attempt to obtain
\begin{equation}\label{TT}
\| e^{it\Delta} f\|_{W(L^{\tilde q} ,L^q)_t W(L^{\tilde r} ,L^r)_x} \lesssim \|f\|_{\dot H^{\sigma}}
\end{equation}
with the homogeneous Sobolev norm $\|f\|_{\dot H^{\sigma}} = \||\nabla|^{\sigma}f\|_{L^2}$, $\sigma>0$.
In the case of the Lebesgue space estimates,
\begin{equation}\label{efv}
\|e^{it\Delta}f\|_{L_t^q L_x^r} \lesssim \|f\|_{\dot H^{\sigma}},
\end{equation}
where $0<\sigma<n/2$, $q\geq2$ and
\begin{equation}\label{cc1}
\frac2q+\frac nr=\frac n 2-\sigma,
\end{equation}
one can proceed by first considering initial data which are frequency localized to annuli
and then use Littlewood-Paley theory to obtain the desired estimates for general data.
It seems difficult to proceed in this way in the case
of the generalized estimates \eqref{TT}.
Here we bypass Littlewood-Paley theory to obtain
directly the estimates \eqref{TT}. The key ingredient in our approach is the availability
of estimates for the integral kernel of the Fourier multiplier $e^{it\Delta}|\nabla|^{-\sigma}$.
Our main result is the following theorem.

\begin{thm}\label{thm}
Let $n\geq1$.
Let  $2\le\tilde q<q<\infty$, $2\le \tilde r, r \le \infty$ and $\max\{0,(n-2)/4\}<\sigma<n/2$.
Assume that $(\tilde q , \tilde r)$ and $(q,r)$ satisfy
\begin{equation} \label{c1}
\frac{2}{\tilde q} + \frac{n-1}{\tilde r} > \frac{n}{2} -\sigma
\end{equation}
and
\begin{equation} \label{c2}
	\frac{2}{q} + \frac{n}{r} = \frac{n}{2}-\sigma-\frac{n-1}{\tilde r}.
\end{equation}
Then we have
\begin{equation}\label{T}
\| e^{it\Delta} f\|_{W(L^{\tilde q} ,L^q)_t W(L^{\tilde r} ,L^r)_x} \lesssim \|f\|_{\dot H^{\sigma}}.
\end{equation}
\end{thm}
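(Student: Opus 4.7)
\medskip
\noindent\textbf{Proof plan.} The plan is to recast \eqref{T} as a convolution estimate. Setting $g=|\nabla|^{\sigma}f$, so that $\|g\|_{L^{2}}=\|f\|_{\dot H^{\sigma}}$, we have
\[
e^{it\Delta}f(x)=(K_{t}*g)(x),\qquad K_{t}(x)=\mathcal{F}^{-1}\!\bigl(e^{-it|\xi|^{2}}|\xi|^{-\sigma}\bigr)(x),
\]
and the target becomes
\[
\|K_{t}*g\|_{W(L^{\tilde q},L^{q})_{t}W(L^{\tilde r},L^{r})_{x}}\lesssim \|g\|_{L^{2}}.
\]
Following the authors' aim of bypassing Littlewood--Paley, I would dualise via $TT^{*}$ and reduce matters to pointwise bounds on the kernel family $\{K_{t}\}$, combined with a Hardy--Littlewood--Sobolev integration in the time variable.

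The central input is a pointwise estimate for $K_{t}$ obtained by scaling and stationary phase. Setting $\xi=\eta/\sqrt{|t|}$ gives $K_{t}(x)=c\,|t|^{(\sigma-n)/2}\widetilde K(\pm x/\sqrt{|t|})$, where $\widetilde K(y)=\int e^{i(y\cdot\eta-|\eta|^{2})}|\eta|^{-\sigma}\,d\eta$. For $|y|\lesssim 1$ the integral is essentially the Riesz kernel, so $|\widetilde K(y)|\lesssim |y|^{\sigma-n}$; for $|y|\gtrsim 1$ the phase has a unique nondegenerate critical point at $\eta=y/2$, and stationary phase yields $\widetilde K(y)=c\,|y|^{-\sigma}e^{i|y|^{2}/4}+O(|y|^{-\sigma-1})$. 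Rescaling back produces
\[
|K_{t}(x)|\lesssim |x|^{\sigma-n}\,\mathbf{1}_{\{|x|\lesssim\sqrt{|t|}\}}+|t|^{\sigma-n/2}|x|^{-\sigma}\,\mathbf{1}_{\{|x|\gtrsim\sqrt{|t|}\}},
\]
together with a fast-oscillating phase $e^{i|x|^{2}/(4t)}$ in the far-field piece that will be indispensable.

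I would then treat the two regions separately. Convolution against the near-field part gives a time-independent Hardy--Littlewood--Sobolev bound of Riesz type into the target $W(L^{\tilde r},L^{r})$-space. For the far-field part the Wiener amalgam viewpoint is decisive: on a unit ball centred at $x_{0}$ with $|x_{0}|\gtrsim\sqrt{|t|}$ the linearised phase $x_{0}\!\cdot\! x/(2t)$ oscillates at frequency $\approx |x_{0}|/|t|$, and integrating over the associated sphere of directions (a Van-der-Corput / curvature argument) yields an extra $(n-1)/2$ power of decay. This is what generates the factor $(n-1)/\tilde r$ in \eqref{c1}--\eqref{c2}. Combining this dispersive bound with a Young-type inequality in Wiener amalgam spaces (using $L^{2}=W(L^{2},L^{2})$) and a one-variable Hardy--Littlewood--Sobolev estimate in $t$ then closes the estimate, with \eqref{c2} playing the role of scaling and the strict inequality \eqref{c1} serving as the subcritical HLS condition. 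The threshold $\sigma>(n-2)/4$ enters as the requirement that the oscillatory far-field gain be compatible with the admissible range of $\tilde r$.

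The main technical obstacle I foresee is extracting the $(n-1)$-dimensional oscillatory gain uniformly over the family of unit cells and passing it through the outer $W(L^{\tilde r},L^{r})$-norm. A naive Young inequality applied directly to the pointwise kernel bound fails, since for $\sigma<n/2$ the near-field and far-field regimes of $K_{t}$ are not simultaneously in any $L^{p}$; only the oscillation, together with its careful interplay with the local--global splitting of the Wiener amalgam norm, rescues the argument---precisely where the direct approach improves on Littlewood--Paley-based methods.
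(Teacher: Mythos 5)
Your overall skeleton --- a $TT^{*}$ reduction, pointwise bounds on the kernel, a Young-type convolution inequality in Wiener amalgam spaces, and a one-dimensional Hardy--Littlewood--Sobolev estimate in $t$ --- is exactly the paper's strategy, but three of your key technical claims do not hold up. First, after $TT^{*}$ the relevant kernel is that of $e^{i(t-s)\Delta}|\nabla|^{-2\sigma}$, so the exponent in the pointwise bound is $\gamma=2\sigma$, which ranges over $((n-2)/2,n)$ and in particular can exceed $n/2$; your stationary-phase discussion only treats the kernel of $|\nabla|^{-\sigma}$, and the regime $\gamma\ge n/2$ has a genuinely different bound (see \eqref{kerbd}). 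Second, your near-field estimate $|\widetilde K(y)|\lesssim|y|^{\sigma-n}$ for $|y|\lesssim1$ is too lossy: the oscillation $e^{-i|\eta|^{2}}$ removes the Riesz singularity, and the rescaled kernel is in fact bounded near the origin, which is what produces the time-decaying factor in the first branch of \eqref{kerbd}. If you keep the Riesz-type bound instead, the near-field contribution carries no decay in $t$ and fails to lie in $L^{\tilde r/2}$ locally for the larger values of $\tilde r$ admitted by the theorem, so the argument breaks there.

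Third, and most importantly, the mechanism you propose for the gain $(n-1)/\tilde r$ --- a Van der Corput/curvature argument exploiting the oscillation of the phase across a unit cell --- is not what drives the proof, and as described it is incompatible with your own reduction: once you take absolute values to apply the convolution relation \eqref{y-ineq}, the phase is gone. In the paper the factor $(n-1)/\tilde r$ is a pure size estimate: for a radial power $|x|^{-2\sigma}$, the $L^{\tilde r/2}$ norm over a unit ball centred at $y$ with $|y|$ large sees only a shell $\{|y|-1\le|x|\le|y|+1\}$ of thickness $O(1)$, so $\int_{|y|-1}^{|y|+1}\rho^{-\sigma\tilde r+n-1}\,d\rho\lesssim(|y|-1)^{-\sigma\tilde r+n-1}$, and taking the $2/\tilde r$ power yields exactly the exponent $-2\sigma+2(n-1)/\tilde r$ appearing in \eqref{x}. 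Contrary to your closing paragraph, the ``naive'' route does not fail: the two-exponent structure of $W(L^{\tilde r/2},L^{r/2})_x$ is precisely what reconciles the near-field singularity (handled by the local exponent) with the far-field decay (handled by the global exponent), and all of Proposition~\ref{prop} --- hence the theorem, via \eqref{hls} and the case analysis leading to \eqref{local_t} --- is proved from the pointwise bound \eqref{kerbd} by elementary integration, with no further use of oscillation. Your diagnosis that only the oscillation rescues the argument therefore points you away from the actual, simpler, proof.
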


\begin{rem}
In particular, when $\tilde r =\infty$, the condition \eqref{c2} becomes the condition \eqref{cc1}
and this case is then comparable to the classical estimate \eqref{efv}.
Roughly speaking, the estimate \eqref{T} in this case shows that the $W(L^{\infty} ,L^r)_x$-norm of the solution has a $L_t^q$-decay at infinity.
Hence, our result is better than the classical one for large time
since the classical $L_x^r$ norm in \eqref{efv} is rougher than the $W(L^{\infty} ,L^r)_x$-norm,
although locally the classical $L_t^q$ regularity is replaced by
$L_t^{\widetilde{q}}$ with $\widetilde{q}<q$.
\end{rem}

\begin{rem} \label{rmk}
From complex interpolation (see \eqref{inter}) between bilinear form estimates given from \eqref{T} and \eqref{C-N2},
we can obtain further estimates.
See Section \ref{sec4} for details.
In a different way, one can also easily obtain further estimates by the interpolation between \eqref{T} and \eqref{efv} with the same $\sigma$.
We omit the details.
Finally, we can trivially increase $q,r$ and diminish $\tilde q, \tilde r$ in \eqref{T} by using the inclusion relation (see \eqref{inclusion})
of Wiener amalgam spaces.

\end{rem}

The outline of this paper is as follows:
In Section \ref{sec2} we prove Theorem \ref{thm} assuming Proposition \ref{prop}
which shows fixed-time estimates for the integral kernel of the Fourier multiplier $e^{it\Delta}|\nabla|^{-\sigma}$.
Proposition \ref{prop} is proved in Section \ref{sec3}.
We consider Remark \ref{rmk} in Section \ref{sec4}.

Throughout this paper, the letter $C$ stands for a positive constant which may be different
at each occurrence.
We also denote $A\lesssim B$ to mean $A\leq CB$ with unspecified constants $C>0$.

\section{Proof of Theorem \ref{thm}} \label{sec2}
In this section we prove Theorem \ref{thm}.
First we list some basic properties of Wiener amalgam spaces which will be frequently used in the sequel.
We refer to \cite{F,F2,F3,H} for details:

\begin{lem}
Let $1\le p_i ,q_i \le \infty$ for $i=0,1,2$. Then the followings hold:
\begin{itemize}
\item Inclusion: if\, $p_1 \ge p_2$ and $q_1 \le q_2$,
\begin{equation} \label{inclusion}
W(L^{p_1}, L^{q_1}) \subset W(L^{p_2}, L^{q_2}).
\end{equation}
\item Convolution: if\, $1/p_2 + 1 = 1/p_0 + 1/p_1$ and $1/q_2 + 1 = 1/q_0 + 1/q_1$,
\begin{equation}\label{y-ineq}
W(L^{p_0}, L^{q_0}) * W(L^{p_1} , L^{q_1}) \subset W(L^{p_2} , L^{q_2}).
\end{equation}
More generally, if\, $B_0 * B_1 \subset B_2$ and $C_0 * C_1 \subset C_2$,
\begin{equation}\label{con}
W(B_0 , C_0) * W(B_1 , C_1) \subset W(B_2, C_2)
\end{equation}
for Banach spaces $B_i$ and $C_i$, $i=0,1,2$.

\

\item Duality: if\, $1\le p, q<\infty$,
\begin{equation*}
W(L^{p}, L^{q})' = W(L^{p'}, L^{q'}).
\end{equation*}
Here, $p', q'$ are conjugate exponents.

\

\item Complex interpolation: if\, $q_0 < \infty$ or $q_1 < \infty$,
\begin{equation} \label{inter}
[W(L^{p_0}, L^{q_0}), W(L^{p_1}, L^{q_1})]_{[\theta]} = W(L^p, L^q)
\end{equation}
whenever
\begin{equation} \label{theta}
\frac{1}{p} = \frac{\theta}{p_0} + \frac{1-\theta}{p_1},  \quad \frac{1}{q} = \frac{\theta}{q_0} + \frac{1-\theta}{q_1}, \quad 0<\theta<1.
\end{equation}
\end{itemize}
\end{lem}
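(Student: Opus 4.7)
The plan is to reduce every part of the lemma to standard facts about Lebesgue and sequence spaces by means of a discrete characterization of the Wiener amalgam norm. First I would fix a nonnegative bump $\varphi_0\in C_c^\infty(\mathbb{R}^n)$ whose translates along a lattice $\delta\mathbb{Z}^n$ form a bounded uniform partition of unity, and establish the norm equivalence
$$
\|f\|_{W(L^p,L^q)} \;\asymp\; \Bigl(\sum_{k\in\delta\mathbb{Z}^n}\|f\,\tau_k\varphi_0\|_{L^p}^q\Bigr)^{1/q},
$$
with constants depending only on the two bumps $\varphi,\varphi_0$. The proof of this equivalence uses that for each $x\in\mathbb{R}^n$ only $O(1)$ translates $\tau_k\varphi_0$ overlap $\tau_x\varphi$, and conversely $\tau_x\varphi$ can be sandwiched between fixed bumps supported in a ball of radius $O(1)$ around $x$; this converts both directions of the equivalence into a bounded-overlap averaging estimate.

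Once the discrete equivalent norm is available, each claim becomes essentially mechanical. For the inclusion \eqref{inclusion}, H\"older on the fixed-radius supports of $\tau_k\varphi_0$ yields $\|f\tau_k\varphi_0\|_{L^{p_2}}\lesssim\|f\tau_k\varphi_0\|_{L^{p_1}}$ when $p_2\le p_1$, and then the sequence-space inclusion $\ell^{q_1}\hookrightarrow\ell^{q_2}$ for $q_1\le q_2$ closes the outer norm. For the convolution \eqref{y-ineq}, I would split $f=\sum_k f_k$, $g=\sum_l g_l$ with $f_k=f\tau_k\varphi_0$, $g_l=g\tau_l\varphi_0$; each $f_k\ast g_l$ is supported in a ball of bounded radius about $k+l$, so only $O(1)$ pairs contribute to $\|(f\ast g)\tau_m\varphi_0\|_{L^{p_2}}$. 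Classical Young in the Lebesgue index bounds $\|f_k\ast g_l\|_{L^{p_2}}$ by $\|f_k\|_{L^{p_0}}\|g_l\|_{L^{p_1}}$, and discrete Young on $\delta\mathbb{Z}^n$ in the sequence index produces the outer $\ell^{q_2}$ bound. The abstract version \eqref{con} runs on the same template, substituting the hypothesis $B_0\ast B_1\subset B_2$ for Young at the Lebesgue level and $C_0\ast C_1\subset C_2$ at the sequence level. Duality follows from the mixed-norm identification $\ell^q(L^p)'=\ell^{q'}(L^{p'})$ for $p,q<\infty$ combined with the density of $C_c^\infty$ in $W(L^p,L^q)$, and the complex interpolation \eqref{inter} is the Calder\'on identity $[\ell^{q_0}(L^{p_0}),\ell^{q_1}(L^{p_1})]_{[\theta]}=\ell^q(L^p)$ (valid whenever $q_0<\infty$ or $q_1<\infty$), transported back to $W(L^p,L^q)$ through the norm equivalence.

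The main obstacle I anticipate is the groundwork in the first paragraph: making the partition-of-unity reduction rigorous with constants independent of $f$, and ensuring that the resulting equivalence is compatible not merely as a norm comparison but as an actual Banach space identification with its discrete model, so that convolution, the duality pairing, and the complex interpolation functor all transport cleanly. Among the four individual claims, the abstract convolution \eqref{con} looks the most delicate, since the inclusions $B_0\ast B_1\subset B_2$ and $C_0\ast C_1\subset C_2$ must be upgraded to continuous bilinear maps whose operator norms remain uniform when one restricts the inputs to compactly supported functions on balls of a fixed radius; once this uniformity is in place, the remaining steps are routine and the lemma follows by combining them.
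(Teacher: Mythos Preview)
The paper does not actually prove this lemma: it is stated as a list of background facts with the sentence ``We refer to \cite{F,F2,F3,H} for details,'' so there is no proof in the paper to compare against. Your outline is correct and is in fact the standard route taken in those references (particularly Feichtinger's original papers and Heil's survey): one passes to a discrete description via a bounded uniform partition of unity, identifies $W(L^p,L^q)$ with the vector-valued sequence space $\ell^q(\delta\mathbb{Z}^n;L^p)$ up to equivalent norms, and then reads off inclusion, convolution, duality, and complex interpolation from the corresponding mixed-norm facts. Your anticipated obstacle---that the norm equivalence must be upgraded to a retraction/coretraction pair so that the interpolation and duality functors transport cleanly---is exactly the point that needs care, and it is handled in the cited literature by showing that the maps $f\mapsto(f\tau_k\varphi_0)_k$ and $(g_k)_k\mapsto\sum_k g_k$ are bounded and compose to the identity; with that in hand the four claims are indeed routine.
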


\subsection{Proof of Theorem \ref{thm}}\label{subsec2.1}
Now we turn to the proof of Theorem \ref{thm}.
To prove \eqref{T}, we can apply the standard TT* argument because of the H\"older's type inequality
\begin{equation} \label{Holder}
|\langle F,G \rangle_{L^2_{x,t}}|
\leq \|F\|_{W(L^{\tilde q}, L^{q})_t W(L^{\tilde r}, L^r)_x} \|G\|_{W(L^{{\tilde q}'} , L^{q'})_t W(L^{{\tilde r}'}, L^{r'})_x}
\end{equation}
which can be proved directly from the definition of these spaces.
Indeed, using this inequality, we first see that \eqref{T} is equivalent to
\begin{equation}\label{T*}
\bigg\| \int_{\mathbb{R}} e^{-is\Delta} |\nabla|^{-\sigma} F(\cdot,s) ds \bigg\|_{L^2_x} \lesssim \| F\|_{W(L^{{\tilde q}'} , L^{q'})_t W(L^{{\tilde r}'}, L^{r'})_x}.
\end{equation}
To show this, note first that
\begin{align} \label{dualT*}
\nonumber
\bigg\| \int_{\mathbb{R}} e^{-is\Delta} |\nabla|^{-\sigma} F(\cdot,s) ds \bigg\|_{L^2_x}
&=\sup_{\|f\|_{L^2}=1}\bigg|\bigg\langle \int_{\mathbb{R}} e^{-is\Delta} |\nabla|^{-\sigma} F(\cdot,s) ds, \,\,f \bigg\rangle_{L^2_x}\bigg|\\
&=\sup_{\|f\|_{L^2}=1} \big| \big\langle F, \,\, e^{is\Delta} |\nabla|^{-\sigma} f \big\rangle_{L^2_{y,s}} \big|.
\end{align}
Hence, if \eqref{T} holds then the right-hand side of \eqref{dualT*} is no larger than
$$\| F\|_{W(L^{{\tilde q}'},L^{q'})_t W(L^{{\tilde r}'},L^{r'})_x}$$
and \eqref{T*} follows.
Conversely, if \eqref{T*} holds then the entire expression in the right-hand side of \eqref{dualT*} is no larger than $C\|F\|_{W(L^{{\tilde q}'} , L^{q'})_t W(L^{{\tilde r}'}, L^{r'})_x}$, which implies \eqref{T}. Thus \eqref{T} and \eqref{T*} are equivalent.
Clearly, applying first \eqref{T} and then \eqref{T*} yields
\begin{equation}\label{TT*}
\bigg\| \int_\mathbb{R} e^{i(t-s)\Delta} |\nabla|^{-2{\sigma}} F(\cdot,s)  ds \bigg\|_{W(L^{\tilde q} ,L^q)_t W(L^{\tilde r} ,L^r)_x} \lesssim \|F\|_{W(L^{\tilde q'} ,L^{q'})_t W(L^{\tilde r'} ,L^{r'})_x}.
\end{equation}
Finally, if \eqref{TT*} holds then
\begin{align*}
\bigg\| \int_{\mathbb{R}}  e^{-is\Delta}  |\nabla|^{-\sigma} F(\cdot,s) ds \bigg\|^2_{L^2_x}
& = \bigg\langle \int_{\mathbb{R}} e^{-it\Delta} |\nabla|^{-\sigma} F(\cdot,t) dt, \int_{\mathbb{R}} e^{-is\Delta} |\nabla|^{-\sigma} F(\cdot,s) ds \bigg\rangle_{L^2_x} \\
& =\bigg\langle F, \int_\mathbb{R} e^{i(t-s)\Delta} |\nabla|^{-2{\sigma}} F(\cdot,s)  ds \bigg\rangle_{L^2_{x,t}} \\
&\lesssim\|F\|^2_{W(L^{{\tilde q}'} , L^{q'})_t W(L^{{\tilde r}'}, L^{r'})_x}
\end{align*}
using the H\"older inequality \eqref{Holder}, which is \eqref{T*}.
Consequently, the estimates \eqref{T}, \eqref{T*} and \eqref{TT*} are equivalent to each other.

From now on, we shall prove \eqref{TT*}.
We first write the integral kernel $K_t(x)$ of the multiplier $e^{it\Delta} |\nabla|^{-2{\sigma}}$ as
\begin{equation} \label{ker}
K_t (x) := \frac{1}{(2\pi)^n}\int_{\mathbb{R}^n} e^{i(x \cdot \xi - t|\xi|^2)} \frac{d\xi}{|\xi|^{2\sigma}}.
\end{equation}
Then \eqref{TT*} is rephrased as follows:
\begin{equation}\label{conv}
\bigg\| \int_{\mathbb{R}} (K_{t-s} * F(\cdot,s))(x) ds \bigg\|_{W(L^{\tilde q} ,L^q)_t W(L^{\tilde r},L^r)_x} \lesssim \|F\|_{W(L^{\tilde q'} ,L^{q'})_t W(L^{\tilde r'},L^{r'})_x}.
\end{equation}
From now on, we will obtain \eqref{conv}.
By Minkowski's inequality and the convolution relation \eqref{y-ineq}, it follows that
\begin{align}\label{bfHLS}
\nonumber \bigg\| \int_{\mathbb{R}} (K_{t-s} * &F(\cdot,s))(x) ds \bigg\|_{W(L^{\tilde q} ,L^q)_t W(L^{\tilde r} ,L^r)_x} \\
\nonumber
&\le \bigg\| \int_{\mathbb{R}} \|K_{t-s} * F(\cdot,s)\|_{W(L^{\tilde r} ,L^r)_x} ds \bigg\|_{W(L^{\tilde q},L^q)_t} \\
&\le \bigg\| \int_{\mathbb{R}} \|K_{t-s}\|_{W(L^{\frac{\tilde r}{2}} ,L^{\frac{r}{2}})_x} \|F(\cdot,s)\|_{W(L^{\tilde r'} ,L^{r'})_x} ds\bigg\|_{W(L^{\tilde q} ,L^q)_t}.
\end{align}
Recall the Hardy-Littlewood-Sobolev fractional integration theorem (see e.g. \cite{St}, p. 119) in dimension $1$:
\begin{equation}\label{hls}
L^p(\mathbb{R})\ast L^{1/\alpha,\infty}(\mathbb{R})\hookrightarrow L^q(\mathbb{R})
\end{equation}
for $0<\alpha < 1$ and $1\le p<q<\infty$ with $\frac{1}{q} +1 = \frac{1}{p} + \alpha$.
Applying \eqref{hls} with $p=q'$ and $\alpha = \frac{2}{q}$, and Young's inequality,
the convolution relations \eqref{con} then give
\begin{equation*}
W(L^{\frac{\tilde q}{2}}, L^{\frac{q}{2}, \infty})_t * W(L^{\tilde q'}, L^{q'})_t \subset W(L^{\tilde q}, L^q )_t
\end{equation*}
for
\begin{equation}\label{you}
2\leq\widetilde{q}\leq\infty\quad \text{and}\quad 2<q<\infty.
\end{equation}
Hence we get
\begin{align}\label{bfHLS2}
\nonumber\bigg\| \int_{\mathbb{R}} \|K_{t-s}\|_{W(L^{\frac{\tilde r}{2}} ,L^{\frac{r}{2}})_x} &\|F(\cdot,s)\|_{W(L^{\tilde r'} ,L^{r'})_x} ds\bigg\|_{W(L^{\tilde q} ,L^q)_t}\\
\lesssim&
\| K_t \|_{{W(L^{\frac{\tilde q}{2}}, L^{\frac{q}{2},\infty})_t}{W(L^{\frac{\tilde r}{2}} ,L^{\frac{r}{2}})_x}}
\|F\|_{W(L^{\tilde q'} ,L^{q'})_t W(L^{\tilde r'},L^{r'})_x}.
\end{align}
Combining \eqref{bfHLS} and \eqref{bfHLS2}, we now obtain the desired estimate \eqref{conv} if
\begin{equation}\label{t}
\| K_t \|_{{W(L^{\frac{\tilde q}{2}}, L^{\frac{q}{2},\infty})_t}{W(L^{\frac{\tilde r}{2}} ,L^{\frac{r}{2}})_x}} < \infty
\end{equation}
for $(\tilde q , \tilde r)$ and $(q,r)$ satisfying the same conditions as in Theorem \ref{thm}.
To show \eqref{t}, we use the following fixed-time estimates for the integral kernel which will be proved in Section \ref{sec3}:

\begin{prop} \label{prop}
Let $n\geq1$.
Let $2\leq \tilde r,r \leq \infty$ and $0<\sigma<n/2$.
Assume that
\begin{equation} \label{c3}
\frac{n-1}{\tilde r} + \frac{n}{r} <\sigma \quad\text{if}\quad 0 < \sigma \le n/4,
\end{equation}
and
\begin{equation} \label{c4}
\frac{n-1}{\tilde r} + \frac{n}{r} < \frac{n}{2} - \sigma\quad\text{if}\quad n/4 \le \sigma < n/2.
\end{equation}
Then we have
	\begin{equation}\label{x}
	\|K_t\|_{W(L^{\frac{\tilde r}{2}} ,L^{\frac{r}{2}})_x} \lesssim
	\begin{cases}
	|t|^{-\frac{n}{2} + \sigma+\frac{n-1}{\tilde r}} \quad\text{if} \quad 0<|t| \le 1, \\
	|t|^{-\frac{n}{2} + \sigma+ \frac{n-1}{\tilde r}+\frac{n}{r}} \quad \text{if} \quad |t| \ge 1.
	\end{cases}
	\end{equation}	
\end{prop}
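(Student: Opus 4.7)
The plan is to reduce the Wiener-amalgam kernel estimate to a pointwise bound on $K_t$, then to integrate region-by-region. Using the subordination identity $|\xi|^{-2\sigma}=\Gamma(\sigma)^{-1}\int_0^\infty s^{\sigma-1}e^{-s|\xi|^2}\,ds$ (valid for $0<\sigma<n/2$) in \eqref{ker} and evaluating the resulting complex Gaussian integral, I would write
\[
K_t(x)=\frac{c_n}{\Gamma(\sigma)}\int_0^\infty s^{\sigma-1}(s+it)^{-n/2}e^{-|x|^2/(4(s+it))}\,ds.
\]
The substitution $s=|t|u$ exhibits the self-similar form $K_t(x)=|t|^{\sigma-n/2}G(|x|^2/|t|)$ for an explicit one-variable integral $G$, reducing the problem to the asymptotic analysis of $G$ together with tracking the scaling.

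Next I would analyse $G(w)$ by endpoint/stationary-phase estimates. The integrand contains a factor $e^{-w/(4(u+i))}$ whose real part $-wu/(4(u^2+1))$ is non-positive on $u\ge 0$ and vanishes only at $u=0$ and $u=\infty$, so for large $w$ the integral is concentrated near these two endpoints. Expanding $1/(u+i)=-i+u+O(u^2)$ near $0$ extracts $\int_0^\infty u^{\sigma-1}e^{-wu/4}\,du\sim w^{-\sigma}$; the substitution $v=w/(4u)$ near infinity yields $\sim w^{\sigma-n/2}$. The slower-decaying of the two is $w^{-\sigma}$ precisely when $\sigma\le n/4$ and $w^{\sigma-n/2}$ when $\sigma\ge n/4$, matching the case dichotomy \eqref{c3}/\eqref{c4}. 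Combined with the trivial $|G|\lesssim 1$ near the origin (which follows from the majorant $\int_0^\infty u^{\sigma-1}(u^2+1)^{-n/4}\,du<\infty$ under $0<\sigma<n/2$), this gives
\[
|K_t(x)|\lesssim|t|^{\sigma-n/2}\min\!\bigl(1,\,(|x|^2/|t|)^{-\alpha}\bigr),\qquad \alpha:=\min(\sigma,\,n/2-\sigma).
\]

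To finish, I would split $\mathbb{R}^n=\{|x|\le\sqrt{|t|}\}\cup\{|x|\ge\sqrt{|t|}\}$ and compute the local $L^{\tilde r/2}$-norms on unit cubes in each region, then take the $L^{r/2}$-norm over cube centres (most conveniently in polar coordinates). For $|t|\le 1$ the inner region fits inside essentially one unit cube, and its local $L^{\tilde r/2}$-norm picks up the volume factor of the ball of radius $\sqrt{|t|}$; for $|t|\ge 1$ there are $\sim|t|^{n/2}$ inner unit cubes of constant amplitude $|t|^{\sigma-n/2}$, producing the additional $|t|^{n/r}$ factor after the $L^{r/2}$-summation. The integrability conditions \eqref{c3} and \eqref{c4} are exactly what is needed for the outer-tail contribution $\int(|x|^2/|t|)^{-\alpha r/2}\,dx$ (and its local Wiener-amalgam analogue) to converge in each regime; moreover the outer-tail contribution then matches the inner-region one so that the final bound takes the stated form.

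The main obstacle is the asymptotic analysis of $G$, in particular rigorously isolating the two endpoint contributions $w^{-\sigma}$ and $w^{\sigma-n/2}$ (via, e.g., a contour deformation or integration-by-parts argument that kills the intermediate $u\sim 1$ region, where the exponent $-wu/(4(u^2+1))$ attains its most negative value $-w/8$), and selecting the dominant one to get the $\sigma=n/4$ dichotomy. Once the pointwise bound is in hand, the translation into the Wiener amalgam estimate, including accounting for the $(n-1)/\tilde r$ factor via the $L^{\tilde r/2}$-geometry of unit cubes intersecting the sphere $|x|=\sqrt{|t|}$ (whose angular dimension is $n-1$), is a careful but essentially mechanical geometric bookkeeping.
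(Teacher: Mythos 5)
Your plan is correct and its overall architecture coincides with the paper's: both routes reduce \eqref{x} to the pointwise bound $|K_t(x)|\lesssim |t|^{\sigma-n/2}\min\bigl(1,(|x|^2/|t|)^{-\alpha}\bigr)$ with $\alpha=\min(\sigma,n/2-\sigma)$, and then split at the sphere $|x|=\sqrt{|t|}$, estimate the local $L^{\tilde r/2}$-norms, and take the $L^{r/2}$-norm over translates. The one genuine difference is how that pointwise bound is obtained: the paper simply quotes Lemma 2.2 of \cite{BBCRV} with $\gamma=2\sigma$ (which is exactly where the $\sigma\le n/4$ versus $\sigma\ge n/4$ dichotomy, and hence the split between \eqref{c3} and \eqref{c4}, originates), whereas you re-derive it via the subordination identity and the endpoint asymptotics of the profile $G(w)$. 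Your claimed asymptotics ($w^{-\sigma}$ from $u\to0$, $w^{\sigma-n/2}$ from $u\to\infty$, exponential smallness for $u\sim1$) agree with the cited lemma, so the approach is sound and has the virtue of being self-contained; the price is that the rigorous endpoint analysis (e.g.\ controlling the $O(wu^2)$ error near $u=0$) is nontrivial and is precisely what the citation buys the authors for free. On the bookkeeping step, two harmless discrepancies: the $(n-1)/\tilde r$ gain for $|t|\le1$ actually arises from integrating $\rho^{-\sigma\tilde r+n-1}$ outward from $\rho=\sqrt{|t|}$ within a single unit ball near the origin (the paper extracts it via the mean value theorem), rather than from cubes meeting the sphere; and for $|t|\ge1$ your cube count yields $|t|^{\sigma-n/2+n/r}$, which is sharper than the stated $|t|^{-n/2+\sigma+(n-1)/\tilde r+n/r}$, so the stated bound follows a fortiori. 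Neither point is a gap.
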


To begin with, we set $h(t)= \|K_t\|_{W(L^{\frac{\tilde r}{2}} ,L^{\frac{r}{2}})_x}$
and choose $\varphi(t) \in C_0^{\infty}(\mathbb{R})$ supported on $\{t\in\mathbb{R}:|t|\le1\}$.
To calculate $\| h \|_{{W(L^{\frac{\tilde q}{2}}, L^{\frac{q}{2},\infty})_t}}$ using \eqref{x},
we divide $\|h \tau_k \varphi \|_{L_t^{\tilde q/2}}$ into three cases, $|k|\le 1$, $1\le|k|\le 2$ and $|k|\ge 2$.

First we consider the case $|k|\le 1$. By using \eqref{x} and the support condition of $\varphi$,
\begin{equation}\label{dfg}
\|h \tau_k \varphi \|_{L_t^{\tilde q/2}}^{\tilde q/2} \lesssim \int_{0<|t|\le1} |t|^{\frac{\tilde q}{2} (-\frac{n}{2} + \sigma+\frac{n-1}{\tilde r})} dt + \int_{1\le|t|\le|k|+1} |t|^{\frac{\tilde q} {2} (-\frac{n}{2} + \sigma+ \frac{n-1}{\tilde r}+\frac{n}{r})} dt.
\end{equation}
Since $\frac{\tilde q}{2} (-\frac{n}{2} + \sigma+\frac{n-1}{\tilde r})+1>0$ by the condition \eqref{c1},
the first integral in the right-hand side of \eqref{dfg} is trivially finite.
The second inequality is bounded as follows:
\begin{align}\label{ert}
\nonumber\int_{1\le|t|\le|k|+1} |t|^{\frac{\tilde q} {2} (-\frac{n}{2} + \sigma+ \frac{n-1}{\tilde r}+\frac{n}{r})} dt
&\lesssim\frac{(|k|+1)^{\frac{\tilde q} {2} (-\frac{n}{2} + \sigma+ \frac{n-1}{\tilde r}+\frac{n}{r})+1} - 1 }
{\frac{\tilde q} {2} (-\frac{n}{2} + \sigma+ \frac{n-1}{\tilde r}+\frac{n}{r})+1}\\
&\lesssim |k|.
\end{align}
Indeed, since ${\frac{\tilde q}{2}(- \frac{n}{2} + \sigma+ \frac{n-1}{\tilde r}+\frac{n}{r})}<0$
by the condition \eqref{c2}, the second inequality in \eqref{ert} follows easily from the mean value theorem.
Hence we get
\begin{equation*}
\|h \tau_k \varphi \|_{L_t^{\tilde q/2}}^{\tilde q/2} \lesssim 1
\end{equation*}
when $|k|\le 1$.
The other cases $1\le|k|\le 2$ and $|k|\ge 2$ are handled in the same way:

\begin{align}
\nonumber
\|h \tau_k \varphi \|_{L_t^{\tilde q/2}}^{\tilde q/2}&\lesssim \int_{|k|-1\le|t|\le1} |t|^{\frac{\tilde q}{2} (- \frac{n}{2} + \sigma+\frac{n-1}{\tilde r})} dt + \int_{1\le|t|\le|k|+1} |t|^{\frac{\tilde q}{2} (-\frac{n}{2} + \sigma+ \frac{n-1}{\tilde r}+\frac{n}{r})} dt \\
\nonumber
&\lesssim 1+ |k| \\
\nonumber
&\lesssim 1
\end{align}
when $1\le|k|\le 2$, and when $|k|\ge 2$

\begin{align}
\nonumber
\|h \tau_k \varphi \|_{L_t^{\tilde q/2}}^{\tilde q/2}&\lesssim \int_{{|k|-1}\le|t|\le{|k|+1}} |t|^{\frac{\tilde q} {2} (-\frac{n}{2} + \sigma+ \frac{n-1}{\tilde r}+\frac{n}{r})} dt \\
\nonumber
\label{t-3}
&\lesssim\frac{(|k|+1)^{\frac{\tilde q}{2} (- \frac{n}{2} + \sigma+ \frac{n-1}{\tilde r}+\frac{n}{r})+1} - (|k|-1)^{\frac{\tilde q}{2} (- \frac{n}{2} + \sigma+ \frac{n-1}{\tilde r}+\frac{n}{r}) +1}}{\frac{\tilde q} {2} (-\frac{n}{2} + \sigma+ \frac{n-1}{\tilde r}+\frac{n}{r})+1} \\
\nonumber
&\lesssim (|k|-1)^{\frac{\tilde q}{2} (-\frac{n}{2} + \sigma+ \frac{n-1}{\tilde r}+\frac{n}{r})}.
\end{align}
Consequently, we get

\begin{equation} \label{local_t}
\|h\tau_k\varphi\|_{L_t^{\tilde q/2}} \lesssim
\begin{cases}
1 \quad\text{if}\quad |k| \leq 2,\\
(|k|-1)^{- \frac{n}{2} + \sigma+ \frac{n-1}{\tilde r}+\frac{n}{r}} \quad\text{if}\quad |k| \geq 2.
\end{cases}
\end{equation}
By \eqref{local_t}, $\|h\tau_k\varphi\|_{L_t^{\tilde q/2}}$  belongs to $L^{\frac{q}{2}, \infty}_k$
since we are assuming the condition \eqref{c2} which is equivalent to $\frac{2}{q} = \frac{n}{2}-\sigma-\frac{n-1}{\tilde r} -\frac{n}{r}$.
This implies
$$\| h \|_{{W(L^{\frac{\tilde q}{2}}, L^{\frac{q}{2},\infty})_t}}<\infty$$
for $(\tilde q , \tilde r)$ and $(q,r)$ satisfying the conditions \eqref{you}, $2\leq \tilde r,r \leq \infty$, \eqref{c1},
\eqref{c2}, \eqref{c3}, \eqref{c4}.

Combining \eqref{c2} and \eqref{c3}, we see $2/q>n/2-2\sigma$.
Since $2<q<\infty$ by \eqref{you}, this implies the restriction $\sigma>(n-2)/4$.
On the other hand, the conditions \eqref{c2} and \eqref{c4} imply $2/q>0$.
There is no restriction in this case.
Finally, combining \eqref{c1} and \eqref{c2}, we see $2/\widetilde{q}>2/q+n/r$. Hence $q>\widetilde{q}$.
Therefore, we get the desired estimate \eqref{t} for $(\tilde q , \tilde r)$ and $(q,r)$ satisfying
$2\le\tilde q<q<\infty$, $2\le \tilde r, r \le \infty$, \eqref{c1}, \eqref{c2}
when $(n-2)/4<\sigma<n/2$.
This completes the proof of Theorem \ref{thm}.


\section{Proof of Proposition \ref{prop}} \label{sec3}
In this section, we prove Proposition \ref{prop} by making use of the following lemma.
(As mentioned in \cite{BBCRV}, this lemma is seen to be sharp in the case $\gamma=n/2$.)

\begin{lem} (\cite{BBCRV}, Lemma 2.2)
Let $n\geq1$ and $0< \gamma < n$. Then if $t \neq 0$
\begin{equation}\label{kerbd}
\left| \int_{\mathbb{R}^n} e^{i(x \cdot \xi - t|\xi|^2)} \frac{d\xi}{|\xi|^{\gamma}} \right| \lesssim
\begin{cases}
\frac{|t|^{-(n/2-\gamma)}}{(|x|^2 + |t|)^{\gamma/2}} \quad\text{if} \quad 0 < \gamma \le \frac{n}{2},\\
\frac{1}{(|x|^2 + |t|)^{(n-\gamma)/2}} \quad \text{if} \quad \frac{n}{2} \le \gamma < n.
\end{cases}
\end{equation}
\end{lem}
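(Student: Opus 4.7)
The plan is to reduce the Wiener amalgam norm of $K_t$ to elementary computations via the pointwise bound supplied by the cited lemma. Applying the lemma with $\gamma=2\sigma$---the hypothesis $0<\sigma<n/2$ gives $0<\gamma<n$, and the two regimes $0<\sigma\le n/4$ and $n/4\le \sigma<n/2$ fall exactly into the two cases of \eqref{kerbd}---one reads off $|K_t(x)|\lesssim |t|^{\sigma-n/2}$ for $|x|\lesssim |t|^{1/2}$, together with the decaying bounds $|K_t(x)|\lesssim |t|^{2\sigma-n/2}|x|^{-2\sigma}$ (Case~1) or $|K_t(x)|\lesssim |x|^{2\sigma-n}$ (Case~2) for $|x|\gtrsim |t|^{1/2}$.

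Next I would invoke the standard equivalent norm on $W(L^{\tilde r/2},L^{r/2})$ obtained by choosing $\varphi$ adapted to the unit lattice,
\[
\|K_t\|_{W(L^{\tilde r/2},L^{r/2})_x}\approx\Bigl(\sum_{k\in\mathbb{Z}^n}\|K_t\|_{L^{\tilde r/2}(Q_k)}^{r/2}\Bigr)^{2/r},\qquad Q_k=k+[0,1]^n,
\]
and split the sum at the scale $R=\max(1,|t|^{1/2})$. On outer cubes ($|k|\ge 2R$) the kernel is essentially constant on $Q_k$, so $\|K_t\|_{L^{\tilde r/2}(Q_k)}$ inherits the outer pointwise bound; on inner cubes ($|k|\le 2R$) it is dominated by the constant $|t|^{\sigma-n/2}$.

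With this decomposition the proof reduces to the elementary comparison $\sum_{|k|\ge R}|k|^{-\alpha}\sim R^{n-\alpha}$ (valid for $\alpha>n$), whose convergence is precisely what \eqref{c3}/\eqref{c4} encode: they force $\sigma r>n$ in Case~1 and $(n-2\sigma)r/2>n$ in Case~2, respectively. For $|t|\ge 1$ the inner part contributes $\sim |t|^{n/2}\cdot |t|^{(\sigma-n/2)r/2}$ while the outer tail sum produces the same power (via the identity $(2\sigma-n/2)r/2+(n-\sigma r)/2=n/2+(\sigma-n/2)r/2$ in Case~1, and its analogue in Case~2), so the two halves combine to $|t|^{n/r+\sigma-n/2}$, dominated by the claimed $|t|^{-n/2+\sigma+(n-1)/\tilde r+n/r}$ since $(n-1)/\tilde r\ge 0$. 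For $|t|\le 1$ only $Q_0$ is inner, and the outer-cube sum is bounded by a constant (multiplied by $|t|^{2\sigma-n/2}$ in Case~1); each contribution is dominated by $|t|^{-n/2+\sigma+(n-1)/\tilde r}$ because the inequalities $(n-1)/\tilde r<\sigma$ and $\sigma+(n-1)/\tilde r<n/2$ follow directly from \eqref{c3}/\eqref{c4}.

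The main technical obstacle will be the regime $|t|\le 1$, where the transition scale $|t|^{1/2}$ lies strictly inside $Q_0$. There one must further split $Q_0$ into $\{|x|\le|t|^{1/2}\}$ and $\{|t|^{1/2}\le|x|\le 1\}$; the outer portion is estimated by a polar-coordinate integral whose value depends on the sign of $n-\sigma\tilde r$ (with a logarithmic borderline at equality). A short arithmetic check in each of the three subcases, using \eqref{c3} together with $|t|\le 1$, shows the resulting contribution is always dominated by $|t|^{-n/2+\sigma+(n-1)/\tilde r}$, completing the proof.
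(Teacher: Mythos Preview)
Your proposal does not address the stated lemma. The lemma is the \emph{pointwise} bound on the oscillatory integral
\[
\left|\int_{\mathbb{R}^n} e^{i(x\cdot\xi-t|\xi|^2)}\,\frac{d\xi}{|\xi|^{\gamma}}\right|,
\]
which the paper does not prove at all---it is quoted from \cite{BBCRV}, Lemma~2.2, and used as a black box. What you have written is instead a proof sketch for Proposition~\ref{prop} (the Wiener amalgam estimate $\|K_t\|_{W(L^{\tilde r/2},L^{r/2})_x}$), and you \emph{invoke} the lemma in your first sentence rather than establish it. A proof of the lemma itself would have to be a stationary-phase / van der Corput argument on the oscillatory integral, splitting at $|\xi|\sim |x|/|t|$ and exploiting the phase $x\cdot\xi-t|\xi|^2$; none of that is present.

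If one reads your proposal as aimed at Proposition~\ref{prop}, then it is essentially the paper's proof in Section~\ref{sec3}: both apply the lemma with $\gamma=2\sigma$, split the translation parameter at scale $1+\sqrt{|t|}$, and reduce to radial integrals whose convergence is governed by \eqref{c3}/\eqref{c4}. The only difference is cosmetic---you use the lattice-cube description of the amalgam norm, whereas the paper uses a continuous window $\varphi$. Your bound on the inner cubes is in fact slightly sharper (you get $|t|^{\sigma-n/2+n/r}$ for $|t|\ge1$, while the paper enlarges the window to a full annulus and picks up the extra $(n-1)/\tilde r$), and you then relax to the stated exponent; both arrive at \eqref{x}.
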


We prove \eqref{x} only for the case $\widetilde{r},r<\infty$
because the other cases $\widetilde{r}=\infty$ or $r=\infty$ follow clearly and more easily from the same argument.
We divide cases into $0 < \sigma \le n/4$ and $n/4 \le \sigma < n/2$.

\subsection{The case $0 < \sigma \le n/4$}
From \eqref{ker} and \eqref{kerbd} with $\gamma=2\sigma$, we see
\begin{equation} \label{upbd}
|K_t (x)| \lesssim
\begin{cases}
|t|^{-(\frac{n}{2} - 2\sigma)}|t|^{-\sigma} \quad \text{if} \quad |x| \le \sqrt{t},\\
|t|^{-(\frac{n}{2} - 2\sigma)}|x|^{-2 \sigma} \quad \text{if} \quad |x| \ge \sqrt{t}.
\end{cases}
\end{equation}
To calculate $\|K_t\|_{W(L^{\frac{\tilde r}{2}}, L^{\frac{r}{2}})_x}$ using \eqref{upbd},
we choose $\varphi(x) \in C_0^{\infty}(\mathbb{R}^n)$ supported on $\{x\in\mathbb{R}^n:|x|\le1\}$
and divide $\|K_t \tau_y \varphi\|_{L_x^{\tilde r/2}}$ into two cases, $|y|\le 1+\sqrt{t}$ and $|y|\ge 1+\sqrt{t}$.

First we consider the case $|y|\le 1+\sqrt{t}$.
By using \eqref{upbd} and the support condition of $\varphi$,
\begin{align}
\nonumber
|t|^{\frac{\widetilde{r}}{2}(\frac{n}{2} - 2\sigma)}\|K_t \tau_y \varphi\|_{L_x^{\tilde r/2}}^{\tilde r/2}
 &\lesssim \int_{|y|-1 \le |x| \le \sqrt{t}} |t|^{-\frac{\sigma\tilde r}{2}} dx + \int_{\sqrt{t} \le |x| \le |y|+1} |x|^{-\sigma\tilde r} dx \\
\nonumber
&= |t|^{-\frac{\sigma\tilde r}{2}} \int_{|y|-1}^{\sqrt{t}} \rho^{n-1} d\rho + \int_{\sqrt{t}}^{|y|+1} \rho^{-\sigma\tilde r+n-1} d\rho \\
\nonumber
&=|t|^{-\frac{\sigma\tilde r}{2}} \frac{ {\sqrt{t}}^n - (|y|-1)^n}{n} +
\frac { (|y|+1)^{-\sigma \tilde r + n} - \sqrt{t}^{-\sigma\tilde r + n} }{-\sigma \tilde r +n}.
\end{align}
Since $n-1\ge 0$ and $-\sigma \tilde r + n-1 <0$ from \eqref{c3},
by applying the mean value theorem as before, we now see
\begin{align}\label{I}
\nonumber
|t|^{\frac{\widetilde{r}}{2}(\frac{n}{2} - 2\sigma)}\|K_t \tau_y \varphi\|_{L_x^{\tilde r/2}}^{\tilde r/2}
&\lesssim |t|^{\frac{\tilde r}{2}(-\sigma+\frac{n-1}{\tilde r})} (\sqrt{t}-|y|+1) + |t|^{\frac{\tilde r}{2}(-\sigma+\frac{n-1}{\tilde r})} (|y|+1-\sqrt{t}) \\
&= 2|t|^{\frac{\tilde r}{2}(-\sigma+\frac{n-1}{\tilde r})}
\end{align}
when $|y|\le 1+\sqrt{t}$.
The other case $|y|\ge 1+\sqrt{t}$ is handled in the same way:
\begin{align}\label{II}
\nonumber
|t|^{\frac{\widetilde{r}}{2}(\frac{n}{2} - 2\sigma)}\|K_t \tau_y \varphi\|_{L_x^{\tilde r/2}}^{\tilde r/2}
&\lesssim \int_{|y|-1 \le |x| \le |y|+1} |x|^{-\sigma\tilde r} dx \\
\nonumber
&=\int_{|y|-1}^{|y|+1} \rho^{-\sigma\tilde r+n-1} d\rho \\
\nonumber
&=\frac { (|y|+1)^{-\sigma \tilde r +n} - (|y|-1)^{-\sigma \tilde r + n} }{-\sigma \tilde r + n} \\
&\lesssim (|y|-1)^{{\tilde r} (-\sigma+\frac{n -1}{\tilde r})}.
\end{align}

By \eqref{I} and \eqref{II}, it follows now that
\begin{align}\label{qwd}
\nonumber\|K_t\|_{W(L^{\frac{\tilde r}{2}} ,L^{\frac{r}{2}})_x}^{r/2}
&= \big\|\, \|K_t \tau_y \varphi\|_{L_x^{\tilde r/2}} \big\|_{L_y^{r/2}}^{r/2} \\
\nonumber&\lesssim\int_{|y|\le 1+\sqrt{t}}   |t|^{\frac{r}{2} (\frac{n-1}{\tilde r}-\frac n2+\sigma)}    dy\\
 &\qquad+ \int_{|y| \ge 1+\sqrt{t}} (|y|-1)^{r(-\sigma+\frac{n-1}{\tilde r})}|t|^{-\frac{r}{2} (\frac n2-2\sigma)}  dy.
\end{align}
The first integral in the right-hand side of \eqref{qwd} is bounded as
\begin{align}\label{g1}
\nonumber
\int_{|y|\le 1+\sqrt{t}}   |t|^{\frac{r}{2} (\frac{n-1}{\tilde r}-\frac n2+\sigma)}    dy
&=|t|^{\frac{r}{2} (\frac{n-1}{\tilde r}-\frac n2+\sigma)} \int_0^{1+\sqrt{t}} \rho^{n-1} d\rho \\
&\lesssim |t|^{\frac{r}{2} (\frac{n-1}{\tilde r}-\frac n2+\sigma)}  (1+ \sqrt{t})^n.
\end{align}
For the second inequality, we use the binomial theorem with the binomial coefficients $C_{n,k}$ to obtain
\begin{align}
\nonumber
\int_{|y| \ge 1+\sqrt{t}} (|y|-1)^{r(-\sigma+\frac{n-1}{\tilde r})}  dy
&=\int_{\sqrt{t}}^\infty {\rho}^{r(-\sigma+\frac{n-1}{\tilde r})} (\rho+1)^{n-1} d\rho \\
\nonumber
&=\sum_{k=0}^{n-1} C_{n,k}\int_{\sqrt{t}}^\infty {\rho}^{r(-\sigma+\frac{n-1}{\tilde r})+k} d\rho \\
\nonumber
&\lesssim {\sqrt{t}}^{r(-\sigma+\frac{n-1}{\tilde r})+1} \sum_{k=0}^{n-1} C_{n,k} {\sqrt{t}}^k \\
\label{g2}
&={|t|}^{\frac{r}{2} (-\sigma+ \frac{n-1}{\tilde r})+\frac{1}{2}} (1+\sqrt{t})^{n-1}.
\end{align}
Here, for the third inequality, we used the fact that $r(-\sigma+\frac{n-1}{\tilde r})+k+1<0$ for all $0\leq k\leq n-1$.
Indeed, this fact follows from the condition \eqref{c3}.

Combining \eqref{qwd}, \eqref{g1} and \eqref{g2}, we have
\begin{equation*}
\|K_t\|_{W(L^{\frac{\tilde r}{2}},L^{\frac{r}{2}})_x}\lesssim
|t|^{-\frac{n}{2}+\sigma+\frac{n-1}{\tilde r}} \big((1+\sqrt{t})^n +|t|^{\frac{1}{2}}(1+\sqrt{t})^{n-1} \big)^{\frac{2}{r}}.
\end{equation*}
Hence we get \eqref{x} as desired.

\subsection{ The case $n/4 \le \sigma < n/2$}
The estimate \eqref{x} in this case is proved in the same way as in the previous case.
From \eqref{ker} and \eqref{kerbd} with $\gamma=2\sigma$, we see
\begin{equation}\label{upbd2}
|K_t (x)| \lesssim
\begin{cases}
|t|^{-{\frac{n}{2}} +\sigma} \quad\text{if} \quad|x| \le \sqrt{t},\\
|x|^{-n+2\sigma} \quad\text{if} \quad|x| \ge\sqrt{t}.
\end{cases}
\end{equation}
To calculate $\|K_t\|_{W(L^{\frac{\tilde r}{2}}, L^{\frac{r}{2}})_x}$ using \eqref{upbd2},
we choose $\varphi(x) \in C_0^{\infty}(\mathbb{R}^n)$ supported on $\{x\in\mathbb{R}^n:|x|\le1\}$
and divide $\|K_t \tau_y \varphi\|_{L_x^{\tilde r/2}}$ into two cases, $|y|\le 1+\sqrt{t}$ and $|y|\ge 1+\sqrt{t}$, as before.

First we consider the case $|y|\le 1+\sqrt{t}$.
By using \eqref{upbd2} and the support condition of $\varphi$,

\begin{align}
\nonumber
\|K_t \tau_y \varphi\|_{L_x^{\tilde r/2}}^{\tilde r/2}
&\lesssim \int_{|y|-1 \le |x| \le \sqrt{t}} |t|^{\frac{\tilde r}{2} ({- \frac{n}{2}+\sigma})} dx + \int_{\sqrt{t} \le |x| \le |y|+1} |x|^{\frac{\tilde r}{2} (-n+2\sigma)} dx \\
\nonumber
&= |t|^{\frac{\tilde r}{2} (-\frac{n}{2} + \sigma)} \int_{|y|-1}^{\sqrt t} \rho^{n-1} d\rho + \int_{\sqrt t}^{|y|+1} \rho^{{\tilde r} (-\frac{n}{2}+\sigma+\frac{n-1}{\tilde r})} d\rho \\
\nonumber
&= |t|^{\frac{\tilde r}{2} (-\frac{n}{2} + \sigma)} \frac{{\sqrt{t}}^n - (|y|-1)^n }{n}\\
\nonumber
& \qquad + \frac{ (|y|+1)^{{\tilde r} (-\frac{n}{2}+\sigma+\frac{n-1}{\tilde r})+1}
- \sqrt{t}^{{\tilde r} (-\frac{n}{2}+\sigma+\frac{n-1}{\tilde r})+1} }{{\tilde r} (-\frac{n}{2}+\sigma+\frac{n-1}{\tilde r})+1}.
\end{align}
Since $n-1\ge 0$ and $\tilde r(-\frac{n}{2}+\sigma+\frac{n-1}{\tilde r})<0$ from \eqref{c4},
by applying the mean value theorem as before, we now see

\begin{align}\label{I'}
\nonumber
\|K_t \tau_y \varphi\|_{L_x^{\tilde r/2}}^{\tilde r/2}
&\lesssim |t|^{\frac{\tilde r}{2} (-\frac{n}{2} + \sigma +\frac{n-1}{\tilde r})} (\sqrt{t}-|y|+1)
+ |t|^{\frac{\tilde r}{2}(-\frac{n}{2}+\sigma+\frac{n-1}{\tilde r})} (|y|+1-\sqrt{t}) \\
&= 2|t|^{\frac{\tilde r}{2}(-\frac{n}{2}+\sigma+\frac{n-1}{\tilde r})}
\end{align}
when $|y|\le 1+\sqrt{t}$.
Similarly, for the other case $|y|\ge 1+\sqrt{t}$, we get
\begin{align}\label{II'}
\nonumber
\|K_t \tau_y \varphi\|_{L_x^{\tilde r/2}}^{\tilde r/2} &\lesssim \int_{|y|-1 \le |x| \le |y|+1} |x|^{\frac{\tilde r}{2} (-n+2\sigma)} dx \\
\nonumber
&= \int_{|y|-1}^{|y|+1} \rho^{\tilde r(- \frac{n}{2}+\sigma+\frac{n-1}{\tilde r})} d\rho \\
\nonumber
&= \frac{ (|y|+1)^{\tilde r(- \frac{n}{2}+\sigma+\frac{n-1}{\tilde r})+1}-(|y|-1)^{\tilde r(- \frac{n}{2}+\sigma+\frac{n-1}{\tilde r})+1}}
{\tilde r(- \frac{n}{2}+\sigma+\frac{n-1}{\tilde r})+1} \\
&\lesssim (|y|-1)^{\tilde r(- \frac{n}{2}+\sigma+\frac{n-1}{\tilde r})}.
\end{align}

By \eqref{I'} and \eqref{II'}, it follows now that
\begin{align}\label{qwd2}
\nonumber\|K_t\|_{W(L^{\frac{\tilde r}{2}} ,L^{\frac{r}{2}})_x}^{r/2}
&= \big\|\, \|K_t \tau_y \varphi\|_{L_x^{\tilde r/2}} \big\|_{L_y^{r/2}}^{r/2} \\
\nonumber&\lesssim\int_{|y|\le 1+\sqrt{t}}   |t|^{\frac{r}{2}(-\frac{n}{2}+\sigma+\frac{n-1}{\tilde r})}    dy\\
 &\qquad+ \int_{|y| \ge 1+\sqrt{t}} (|y|-1)^{r(-\frac{n}{2}+\sigma+\frac{n-1}{\tilde r})}  dy.
\end{align}
The first integral in the right-hand side of \eqref{qwd2} is bounded as
\begin{align}\label{g3}
\nonumber
\int_{|y|\le 1+\sqrt{t}} |t|^{\frac{r}{2}(-\frac{n}{2}+\sigma+\frac{n-1}{\tilde r})} dy
&=|t|^{\frac{r}{2}(-\frac{n}{2}+\sigma+\frac{n-1}{\tilde r})} \int_0^{1+\sqrt{t}} \rho^{n-1} d\rho \\
&\lesssim |t|^{\frac{r}{2}(-\frac{n}{2}+\sigma+\frac{n-1}{\tilde r})} (1+ \sqrt{t})^n.
\end{align}
For the second inequality, we use the binomial theorem with the binomial coefficients $C_{n,k}$ to obtain
\begin{align} \label{g4}
\nonumber\int_{|y|\ge 1+\sqrt{t}} (|y|-1)^{r(-\frac{n}{2}+\sigma+\frac{n-1}{\tilde r})} dy
&=\int_{\sqrt{t}}^\infty {\rho}^{r(-\frac{n}{2}+\sigma+\frac{n-1}{\tilde r})} (\rho+1)^{n-1} d\rho \\
\nonumber
&= \sum_{k=0}^{n-1} C_{n,k} \int_{\sqrt{t}}^\infty {\rho}^{r(-\frac{n}{2}+\sigma+\frac{n-1}{\tilde r})+k} d\rho \\
\nonumber
&\lesssim {\sqrt{t}}^{r(-\frac{n}{2}+\sigma+\frac{n-1}{\tilde r})+1} \sum_{k=0}^{n-1} C_{n,k} {\sqrt{t}}^k \\
&=|t|^{\frac{r}{2}(-\frac{n}{2}+\sigma+\frac{n-1}{\tilde r})+\frac{1}{2}} (1+\sqrt{t})^{n-1}.
\end{align}
Here, for the third inequality, we used the fact that $r(-\frac{n}{2}+\sigma+\frac{n-1}{\tilde r})+k+1<0$ for all $0\leq k\leq n-1$.
Indeed, this fact follows from the condition \eqref{c4}.

Combining \eqref{qwd2}, \eqref{g3} and \eqref{g4}, we have
\begin{equation*}
\|K_t\|_{W(L^{\frac{\tilde r}{2}},L^{\frac{r}{2}})_x}\lesssim
|t|^{-\frac{n}{2}+\sigma+\frac{n-1}{\tilde r}} \big((1+\sqrt{t})^n +|t|^{\frac{1}{2}}(1+\sqrt{t})^{n-1} \big)^{\frac{2}{r}}.
\end{equation*}
Hence we get \eqref{x} as desired.

\section{Concluding remarks} \label{sec4}
In this final section, we discuss Remark \ref{rmk} in detail.
As mentioned there, we can obtain further estimates by complex interpolation (see \eqref{inter})
between bilinear form estimates given from \eqref{C-N2} and \eqref{T}.
Here we explain this only for the particular case where we use \eqref{C-N} and
\eqref{T} with $\tilde r=\infty$ instead of \eqref{C-N2} and \eqref{T}, respectively.
This is strictly intended to make the argument shorter, and one could adapt the same argument from this case to handle the other cases as well.

\begin{cor} \label{cor}
Let $n\geq1$ and $\max\{0,(n-2)/8\}<\sigma<n/4$.
Assume that $(q,r)$ satisfy \eqref{cc1},
\begin{equation}\label{c2c2}
\frac2{\widetilde{q}}>\frac n4-\sigma,\quad 0<\frac1q<\frac1{\widetilde{q}}+\frac14\leq\frac12\quad\text{and}\quad 2\leq r\leq\infty.
\end{equation}
Here, $r\neq\infty$ if $n=2$.
Then we have
\begin{equation} \label{fur}
\| e^{it\Delta} f\|_{W(L^{\tilde q} ,L^q)_t W(L^4 ,L^r)_x} \lesssim \|f\|_{\dot H^{\sigma}}.
\end{equation}
\end{cor}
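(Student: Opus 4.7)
The plan is to obtain \eqref{fur} by complex interpolation at $\theta=1/2$ between the two $L^2$-based endpoint bounds singled out in the statement: the Cordero--Nicola inequality \eqref{C-N} for some Schr\"odinger admissible pair $(q_0,r_0)$, and Theorem \ref{thm} specialized to $\widetilde r=\infty$ with Sobolev index $2\sigma$ for some triple $(\widetilde q_1,q_1,r_1)$. Since the target spaces differ in \emph{all} four exponents while the source spaces differ by a derivative, I would use Stein's interpolation theorem applied to the analytic family
$$
T_z = e^{it\Delta}\,|\nabla|^{-2\sigma z},\qquad 0\le \mathrm{Re}\,z\le 1.
$$
On the boundary $\mathrm{Re}\,z=0$ the operator $|\nabla|^{-2i\sigma y}$ is an $L^2$-isometry, so \eqref{C-N} gives
$\|T_{iy}g\|_{X_0}\lesssim \|g\|_{L^2}$ uniformly in $y$, with $X_0 = W(L^\infty,L^{q_0})_t W(L^2,L^{r_0})_x$. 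On $\mathrm{Re}\,z=1$, writing $f=|\nabla|^{-2\sigma}g$ in \eqref{T} and again absorbing $|\nabla|^{-2i\sigma y}$ gives $\|T_{1+iy}g\|_{X_1}\lesssim\|g\|_{L^2}$ uniformly in $y$, with $X_1 = W(L^{\widetilde q_1},L^{q_1})_t W(L^\infty,L^{r_1})_x$, provided the exponents obey $2\le\widetilde q_1<q_1<\infty$, $r_1\ge 2$, $\tfrac{2}{q_1}+\tfrac{n}{r_1}=\tfrac{n}{2}-2\sigma$, $\tfrac{2}{\widetilde q_1}>\tfrac{n}{2}-2\sigma$, and $\max\{0,(n-2)/4\}<2\sigma<n/2$.

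Stein's theorem at $z=1/2$ then yields $\|e^{it\Delta}|\nabla|^{-\sigma}g\|_{[X_0,X_1]_{1/2}}\lesssim\|g\|_{L^2}$, which after the substitution $g=|\nabla|^{\sigma}f$ is exactly \eqref{fur} once the interpolated norm is identified. Applying the interpolation formula \eqref{inter} separately to the outer ($t$) and inner ($x$) Wiener amalgam components gives
$$
[X_0,X_1]_{1/2} = W(L^{2\widetilde q_1},L^{q})_t\,W(L^{4},L^{r})_x,
\quad
\tfrac{1}{q}=\tfrac{1}{2q_0}+\tfrac{1}{2q_1},\ \tfrac{1}{r}=\tfrac{1}{2r_0}+\tfrac{1}{2r_1}.
$$
Setting $\widetilde q=2\widetilde q_1$ matches the statement. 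Adding the Schr\"odinger admissibility $\tfrac{2}{q_0}+\tfrac{n}{r_0}=\tfrac{n}{2}$ to $\tfrac{2}{q_1}+\tfrac{n}{r_1}=\tfrac{n}{2}-2\sigma$ and halving gives $\tfrac{2}{q}+\tfrac{n}{r}=\tfrac{n}{2}-\sigma$, i.e.\ \eqref{cc1}. Ranging $(q_0,r_0)$ over admissible pairs and $(q_1,r_1)$ over the pairs allowed by Theorem \ref{thm}, the map $(1/q_0,1/q_1)\mapsto 1/q$ sweeps out $(0,\tfrac{1}{4}+\tfrac{1}{\widetilde q})$, matching the window on $1/q$ in \eqref{c2c2}; the bound $\widetilde q_1\ge 2$ translates into $\tfrac{1}{\widetilde q}+\tfrac{1}{4}\le\tfrac{1}{2}$; the strict inequality $\tfrac{2}{\widetilde q_1}>\tfrac{n}{2}-2\sigma$ becomes $\tfrac{2}{\widetilde q}>\tfrac{n}{4}-\sigma$; and $r_0,r_1\ge 2$ (with $r_0\ne\infty$ when $n=2$ to avoid the forbidden endpoint of \eqref{admiss}) gives $r\ge 2$ (with $r\ne\infty$ when $n=2$). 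Finally, $\max\{0,(n-2)/4\}<2\sigma<n/2$ is just $\max\{0,(n-2)/8\}<\sigma<n/4$.

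The only non-routine point is verifying admissibility of the analytic family $T_z$ for Stein's theorem, which reduces to the uniform bound $\|\,|\nabla|^{-iy}\|_{L^2\to L^2}=1$ together with a trivial growth estimate coming from $|e^{-2\sigma z\log|\xi|}|$ on compactly supported test functions; the interpolation identity for mixed Wiener amalgam spaces used in the second step is a direct iteration of \eqref{inter}. Both are standard, so no genuine obstacle is anticipated beyond bookkeeping of the exponent ranges.
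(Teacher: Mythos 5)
Your proposal is correct and lands on exactly the same exponent bookkeeping as the paper (the relations $\tfrac1q=\tfrac1{2q_0}+\tfrac1{2q_1}$, $\tfrac1r=\tfrac1{2r_0}+\tfrac1{2r_1}$, $\widetilde q=2\widetilde q_1$, and the translation of \eqref{ccc}-type conditions into \eqref{cc1} and \eqref{c2c2} all check out), but the mechanism is genuinely different. The paper never forms an analytic family of operators: it runs the $TT^\ast$ reduction to the bilinear form $T(F,G)$, applies Cauchy--Schwarz so that the full $|\nabla|^{-2\sigma}$ is absorbed by one factor (yielding the dual estimate \eqref{bfint1} from Theorem \ref{thm} with $\widetilde r=\infty$ and index $2\sigma$) while the other factor uses the dual of \eqref{C-N}, and then interpolates the two resulting \emph{asymmetric bilinear estimates} \eqref{inter1} and \eqref{inter2} at $\theta=1/2$ with the operator held fixed. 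You instead make the derivative exponent the analytic parameter in $T_z=e^{it\Delta}|\nabla|^{-2\sigma z}$ and invoke Stein interpolation directly at the operator level, which lets you skip the $TT^\ast$/bilinear-form detour entirely and produce \eqref{fur} as a forward estimate. The trade-off is that your route needs an \emph{abstract} Stein theorem for analytic families valued in (iterated) Wiener amalgam spaces -- the classical version is stated for $L^p$ targets -- so you must verify analyticity and admissible growth of $z\mapsto T_zf$ in $X_0+X_1$ and justify the three-lines argument in this non-$L^p$ setting; this is available (Cwikel--Janson/Voigt) but is machinery the paper deliberately avoids, since interpolating a fixed bilinear functional only requires the stated identity \eqref{inter} together with standard multilinear complex interpolation. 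Both routes equally rely on identifying the interpolation space of the iterated mixed-norm Wiener amalgam spaces by applying \eqref{inter} in $t$ and in $x$ separately, so neither has an advantage there. In short: correct, same idea of splitting $|\nabla|^{-2\sigma}$ between the two endpoint families, but realized through Stein's analytic interpolation rather than the paper's Cauchy--Schwarz plus bilinear interpolation; your version is shorter at the level of structure but shifts the burden onto a heavier interpolation theorem whose hypotheses you should verify explicitly for these spaces.
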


\begin{rem}
When $\tilde r = 4$, the possible range of $\tilde q$ in Theorem \ref{thm} is
$2 \leq \tilde q < 8/(n-4\sigma+1)$.
On the other hand, the possible range of $\tilde q$ in the above corollary is
$4\leq\widetilde{q}<8/(n-4\sigma)$ if $\sigma>\max\{0,(n-2)/4\}$.
Since $8/(n-4\sigma+1)< 8/(n-4\sigma)$, it gives further estimates which do not follow from Theorem \ref{thm}.
For fixed $q$, the exponent $r$ given from \eqref{cc1} is smaller than $r$ given from \eqref{c2} with $\tilde r =4$.
From this observation and the inclusion relation \eqref{inclusion},
we also note that \eqref{fur} is stronger than the estimate \eqref{T} with $\tilde r =4$ in Theorem \ref{thm}.
Similarly for fixed $r$.
\end{rem}

\begin{proof}[Proof of Corollary \ref{cor}]
Firstly, we recall from Subsection \ref{subsec2.1} that the standard $TT^\ast$ argument gives that
\begin{equation*}
\| e^{it\Delta} f\|_{W(L^{\tilde q} ,L^q)_t W(L^{\tilde r} ,L^r)_x} \lesssim \|f\|_{\dot H^{\sigma}}
\end{equation*}
is equivalent to the estimate \eqref{TT*} which is in turn equivalent to the following bilinear form estimate
\begin{equation} \label{bi}
|T(F,G)| \lesssim \|F\|_{W(L^{\tilde q'},L^{q'})_t W(L^{\tilde r'}, L^{r'})_x} \|G\|_{W(L^{\tilde q'},L^{q'})_t W(L^{\tilde r'}, L^{r'})_x},
\end{equation}
where
\begin{equation*}
T(F,G) :=\int_{\mathbb{R}}\int_{\mathbb{R}} \big\langle e^{-is\Delta}|\nabla|^{-\sigma}F(\cdot,s), e^{-it\Delta}|\nabla|^{-\sigma}G(\cdot,t)\big\rangle_x dsdt.
\end{equation*}

Next, using the Cauchy-Schwarz inequality,
\begin{align}\label{CS}
\nonumber|T(F,G)|&=\bigg| \bigg< \int_{\mathbb{R}} e^{-is\Delta}|\nabla|^{-2\sigma}F(\cdot,s) ds, \int_{\mathbb{R}} e^{-it\Delta}G(\cdot,t) dt \bigg>_x \bigg| \\
&\le \bigg\| \int_{\mathbb{R}} e^{-is\Delta}|\nabla|^{-2\sigma}F(\cdot,s) ds \bigg\|_{L_x^2} \bigg\|\int_{\mathbb{R}} e^{-it\Delta}G(\cdot,t) dt \bigg\|_{L_x^2}.
\end{align}
For the first $L_x^2$ norm in \eqref{CS}, we use the estimate \eqref{T*}
with $\tilde r=\infty$ and $\sigma$ replaced by $2\sigma$,
\begin{equation} \label{bfint1}
\bigg\| \int_{\mathbb{R}} e^{-is\Delta} |\nabla|^{-2\sigma} F(\cdot,s) ds \bigg\|_{L^2_x}
\lesssim \| F\|_{W(L^{\tilde {q}_1'} , L^{q_1'})_t W(L^1 , L^{r_1'})_x},
\end{equation}
where $2 \leq \tilde {q}_1 < q_1 < \infty$, $2\leq r_1 \leq \infty$, $\max\{0,(n-2)/8\} < \sigma < n/4$,
\begin{equation}\label{ccc}
\frac{2}{\tilde {q}_1} > \frac{n}{2} -2\sigma \quad\text{and}\quad
\frac{2}{q_1} + \frac{n}{r_1} = \frac{n}{2}-2\sigma.
\end{equation}
For the second $L_x^2$ norm in \eqref{CS}, we use the following dual estimate of \eqref{C-N},
\begin{equation} \label{bfint2}
\bigg\|\int_{\mathbb{R}} e^{-it\Delta}G(\cdot,t) dt \bigg\|_{L^2_x} \lesssim \|G\|_{W(L^1, L^{q_2'})_t W(L^2, L^{r_2'})_x},
\end{equation}
where $(q_2 ,r_2)$ is Schr\"odinger admissible (see \eqref{admiss}).
Combining \eqref{CS}, \eqref{bfint1} and \eqref{bfint2}, we then have
\begin{equation} \label{inter1}
|T(F,G)| \lesssim \|F\|_{W(L^{\widetilde{q}_1'}, L^{q_1'})_t W(L^1, L^{r_1'})_x} \|G\|_{W(L^1, L^{q_2'})_t W(L^2, L^{r_2'})_x},
\end{equation}
and by symmetry
\begin{equation} \label{inter2}
|T(F,G)|\lesssim \|F\|_{W(L^1, L^{q_2'})_t W(L^2, L^{r_2'})_x} \|G\|_{W(L^{\tilde{q}_1'}, L^{q_1'})_t W(L^1, L^{r_1'})_x},
\end{equation}
for $\widetilde{q}_1$, $(q_1,r_1)$ and $(q_2,r_2)$ given as above.
Finally, by applying the complex interpolation \eqref{inter} with $\theta=1/2$ between \eqref{inter1} and \eqref{inter2},
we obtain \eqref{bi} for
\begin{equation*}
\frac{1}{\tilde q} = \frac{1}{2\tilde{q}_1}, \quad \frac{1}{q} = \frac{1}{2} \Big(\frac{1}{q_1} + \frac{1}{q_2} \Big), \quad \frac{1}{\tilde r}=\frac{1}{4}, \quad \frac{1}{r}=\frac{1}{2} \Big(\frac{1}{r_1} + \frac{1}{r_2} \Big).
\end{equation*}
Combining the second condition in \eqref{ccc} and $2/{q_2} + n/{r_2} = n/2$ implies the condition \eqref{cc1}.
From the first condition in \eqref{ccc}, we see the first condition in \eqref{c2c2}.
Since $\widetilde{q}_1<q_1<\infty$ and $2\leq q_2\leq\infty$, $0<1/q<1/4+1/(2\widetilde{q}_1)=1/4+1/\widetilde{q}$.
Since $\widetilde{q}_1\geq2$, it follows also that $\widetilde{q}\geq4$. Hence we see the second condition in \eqref{c2c2}.
From the conditions $2\leq r_1\leq\infty$ and $2\leq r_2\leq\infty$, we finally see $2\leq r\leq\infty$.
Here, $r\neq\infty$ if $n=2$ since $r_2\neq\infty$ if $n=2$.
This determines the last condition in \eqref{c2c2}.
\end{proof}


\end{document}